\providecommand{\U}[1]{\protect \rule{.1in}{.1in}}
\theoremstyle{plain}
\newtheorem{theorem}{Theorem}[section]
\newtheorem{corollary}[theorem]{Corollary}
\newtheorem{example}[theorem]{Example}
\newtheorem{lemma}[theorem]{Lemma}
\newtheorem{proposition}[theorem]{Proposition}
\newtheorem{remark}[theorem]{Remark}
\numberwithin{equation}{section}
\begin{document}
\title[AOP mappings]{A note on linear approximately orthogonality preserving mappings}
\author{Ye Zhang}
\address{Department of Mathematics, University of New Hampshire, Durham, NH 03824, U.S.A.}
\email{yjg2@wildcats.unh.edu}
\author{Yanni Chen}
\address{Department of Mathematics, University of New Hampshire, Durham, NH 03824, U.S.A.}
\email{yanni.chen@unh.edu}
\author{Don Hadwin}
\address{Department of Mathematics, University of New Hampshire, Durham, NH 03824, U.S.A.}
\email{don@unh.edu}
\urladdr{http://euclid.unh.edu/\symbol{126}don}
\author{Liang Kong}
\address{Institute of Applied Mathematics,Shangluo University, Shangluo 726000, P. R. China}
\email{kongliang2005@163.com}
\thanks{Research supported in part by a Dissertation Year Fellowship form University of New Hampshire.}

\begin{abstract}
In this paper, linear $\varepsilon$-orthogonality preserving mappings are
studied. We define $\hat{\varepsilon}\left(T\right) $ as the smallest
$\varepsilon$ for which $T$ is $\varepsilon$-orthogonality preserving, and
then derive an exact formula for $\hat{\varepsilon}\left(  T\right)  $ in
terms of $\left \Vert T\right \Vert $ and the minimum modulus $m\left(
T\right)  $ of $T$. We see that $\varepsilon$-orthogonality preserving
mappings (for some $\varepsilon<1$) are exactly the operators that are bounded
from below. We improve an upper bounded in the stability equation given in [7,
Theorem 2.3], which was thought to be sharp.
\end{abstract}

\subjclass[2010]{46C05, 47L05, 47B99.}
\keywords{approximate orthogonality, orthogonality preserving
mappings, linear operators, minimum modulus}

\maketitle

\bigskip

\section{\large{\bf{Introduction}}}

Suppose $H$ is a Hilbert spaces, and $\langle \cdot,\cdot \rangle$ is the inner
product on $H$. The usual orthogonality relation $\perp$ is defined by
\[
x\perp y\Leftrightarrow \langle x,y\rangle=0.
\]

A mapping $f$ : $H\rightarrow$ $H$ satisfying the condition: for every $x,y\in
H,$%
\[
x\perp y\Rightarrow f(x)\perp f(y)
\]
is called an \emph{orthogonality-preserving} (\emph{OP}) mapping.

Let $B\left(  H\right)  $ denote the set of all bounded linear operators on
$H$. It is well-known that a linear operator $T\in B(H)$ is OP if and only if
$T$ is a scalar multiple of an isometry.

Let us say that for a given $\varepsilon \in \lbrack0,1],$ two vectors $x,y\in
H$ are $\varepsilon$\emph{-orthogonal}, denoted by $x\perp^{\varepsilon}$ $y$
, if%

\[
\left \vert \langle x,y\rangle \right \vert \leq \varepsilon \Vert x\Vert \cdot \Vert
y\Vert.
\]

It is clear that every pair of vectors are $1$-orthogonal, so the interesting
case is when $\varepsilon \in \lbrack0,1).$

An operator $T\in B\left(  H\right)  $ is \emph{approximately
orthogonality-preserving }(AOP) if there is an $\varepsilon \in \lbrack0,1)$
such that, for every $x,y\in H,$%
\[
x\perp y\Rightarrow Tx\perp^{\varepsilon}Ty.
\]
If we want to include $\varepsilon$ in the notation, we say that $T$ is
$\varepsilon$-AOP. We say that every operator is $1$-AOP. If $0\leq
\varepsilon_{1}\leq \varepsilon_{2}<1$ and $T$ is $\varepsilon_{1}$-AOP, then
$T$ is $\varepsilon_{2}$-AOP. Thus we are interested in the smallest such
$\varepsilon.$

We define a function $\hat{\varepsilon}:B\left(  H\right)  \rightarrow \left[
0,1\right]  $ by
\[
\hat{\varepsilon}\left(  T\right)  =\inf \left \{  \varepsilon \in \lbrack
0,1]:T\text{ is }\varepsilon \text{-AOP }\right \}  .
\]
Thus $\hat{\varepsilon}\left(  T\right)  =1$ whenever $T$ is not AOP.

In \cite{Chm6}, stability property for inner product preserving (not
necessarily linear) mappings was studied. Other approximate orthogonalities in
general normed spaces along with the corresponding approximately
orthogonality-preserving mappings have been studied in \cite{Chm10}%
,\cite{Tur10},\cite{Wso},\cite{ZM}.

In \cite[Theorem 2]{Chm5}, Chmieli\'{n}ski proved every nonzero linear AOP
operator is bounded from below. In this paper we prove that the converse
holds, i.e., $T$ is AOP if and only if $T$ is bounded from below.

Recall that the \emph{minimum modulus} $m\left(  T\right)  $ of $T$ is defined
to be the largest number $m\geq0$ such that, for every $x\in H,$%
\[
\left \Vert Tx\right \Vert \geq m\left \Vert x\right \Vert .
\]
Our main result (Theorem \ref{3}) is the following.
\\
\\
{\bf Theorem 2.3.}$\ $
 Suppose $T\in B(H)\backslash \{0\}.$ Then
\[
\hat{\varepsilon}\left(  T\right)  =\frac{\left \Vert T\right \Vert
^{2}-m\left(  T\right)  ^{2}}{\left \Vert T\right \Vert ^{2}+m\left(  T\right)
^{2}}
\]
and
\[
m\left(  T\right)  =\sqrt{\frac{1-\hat{\varepsilon}\left(  T\right)  }%
{1+\hat{\varepsilon}\left(  T\right)  }}\left \Vert T\right \Vert .
\]
\\

Clearly, this implies that $m\left(  T\right)  >0$ if and only if
$\hat{\varepsilon}\left(  T\right)  <1,$ and $m\left(  T\right)  =\left \Vert
T\right \Vert $ if and only if $\hat{\varepsilon}\left(  T\right)  =0$ if and
only if $T$ is OP.

When $H$ is finite-dimensional, Chmieli\'{n}ski \cite{Chm6} proved that there
is a function $\delta:[0,1)\rightarrow \lbrack0,\infty)$ such that
$\lim_{\varepsilon \rightarrow0^{+}}\delta \left(  \varepsilon \right)  =0$ such
that if $0\leq \varepsilon<1$ and $T\in B\left(  H\right)  $ is $\varepsilon
$-AOP, then there is linear OP mapping $S$ such that
\[
\Vert T-S\Vert \leq \delta \left(  \varepsilon \right)  \min \left \{  \Vert
T\Vert,\left \Vert S\right \Vert \right \}  ,
\]
and asked whether the same holds true when $H$ is infinite-dimensional.

A. Turn\v{s}ek \cite[Theorem 2.3]{Tur} showed that Chmieli\'{n}ski's result
\cite{Chm6} holds for arbitrary $H$ with%
\[
\delta \left(  \varepsilon \right)  =1-\sqrt{\frac{1-\varepsilon}{1+\varepsilon
}},
\]
and he claimed, using an example \cite[Example 2.4]{Tur}, that his result is
sharp. However, we show that Chmieli\'{n}ski's result holds when%
\[
\delta \left(  \varepsilon \right)  =\frac{1-\sqrt{\frac{1-\varepsilon
}{1+\varepsilon}}}{1+\sqrt{\frac{1-\varepsilon}{1+\varepsilon}}}.
\]
Thus, if $\hat{\varepsilon}\left(  T\right)  <1$, there is a linear OP map $S$
such that%
$$
\Vert T-S\Vert \leq \left(  \frac{1-\sqrt{\frac{1-\hat{\varepsilon}\left(
T\right)  }{1+\hat{\varepsilon}\left(  T\right)  }}}{1+\sqrt{\frac
{1-\hat{\varepsilon}\left(  T\right)  }{1+\hat{\varepsilon}\left(  T\right)
}}}\right)  \cdot \min \left \{  \Vert T\Vert,\left \Vert S\right \Vert \right \}
=\frac{1-\sqrt{\frac{1-\hat{\varepsilon}\left(  T\right)  }{1+\hat
{\varepsilon}\left(  T\right)  }}}{2}\left \Vert T\right \Vert .
$$
It follows from Theorem \ref{4} that $\delta \left(  \varepsilon \right)  $ is
the best. Note that if $T=V\left(  T^{\ast}T\right)  ^{1/2}$ is the polar
decomposition of $T,$ Turn\v{s}ek defines $S=\left \Vert T\right \Vert V$, while
we choose $S=\frac{\left \Vert T\right \Vert +m\left(  T\right)  }{2}V$.

Since linear OP mappings are precisely scalar multiples of isometries, a
natural question is whether linear $\varepsilon$-AOP mappings are close to
linear OP mappings (that is, to scalar multiples of isometries) as
$\varepsilon \rightarrow0$. In other words, does $\hat{\varepsilon}\left(
T\right)  $ in some way measure the distance from $T$ to the set
$\mathbb{C}\mathcal{V}$ of scalar multiples of isometries? We prove the
following affirmative answer (Theorem 3.5):%
\[
\hat{\varepsilon}\left(  T\right)  <1\Rightarrow \mathrm{dist}\left(
T,\mathbb{C}\mathcal{V}\right)  =\frac{1-\sqrt{\frac{1-\hat{\varepsilon
}\left(  T\right)  }{1+\hat{\varepsilon}\left(  T\right)  }}}{2}\left \Vert
T\right \Vert .
\]
When $H$ is separable, we actually prove that this formula holds for all
operators $T$ that are not semi-Fredholm with positive index.

If we replace the set of scalar multiples of isometries with the set
$\mathbb{C}\mathcal{U}$ of scalar multiples of the set $\mathcal{U}$ of
unitary operators, we obtain a universal distance formula on a separable
Hilbert space:%
\[
\mathrm{dist}\left(  T,\mathbb{C}\mathcal{U}\right)  =\left \{
\begin{array}
[c]{cc}%
\frac{\left \Vert T\right \Vert +m_{e}\left(  T^{\ast}\right)  }{2} & \text{if
}\dim \ker T>\dim \ker T^{\ast}\text{ }\\
\frac{\left \Vert T\right \Vert +m_{e}\left(  T\right)  }{2} & \text{if }%
\dim \ker T<\dim \ker T^{\ast}\\
\frac{1-\sqrt{\frac{1-\hat{\varepsilon}\left(  T\right)  }{1+\hat{\varepsilon
}\left(  T\right)  }}}{2}\left \Vert T\right \Vert  & \text{if }\dim \ker
T=\dim \ker T^{\ast}.
\end{array}
\right.
\]

When $\dim H=\aleph_{0}$, the formula for $\mathrm{dist}\left(  T,\mathbb{C}%
\mathcal{U}\right)  $ cannot be written solely in terms of $\left \Vert
T\right \Vert $ and $\hat{\varepsilon}\left(  T\right)  $; it seems likely that
the same is true for a formula for $dist\left(  T,\mathbb{C}\mathcal{V}%
\right)  $.

\section{\large{\bf{Main Results}}}

In this paper, we assume the dimension of $H$ is at least $2$.

The following lemma is a well-known result about left invertible operators.

\begin{lemma}
Suppose $T\in B(H).$ The following are equivalent.
\end{lemma}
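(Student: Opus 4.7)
The statement being set up is the classical characterization of bounded-below (equivalently, left invertible) operators. I expect the equivalent conditions to be some subset of the following four: (a) $T$ is left invertible, i.e., there exists $S\in B(H)$ with $ST=I$; (b) $m(T)>0$; (c) $T$ is injective and has closed range; (d) $T^{\ast}T$ is invertible in $B(H)$. My plan is to establish these cyclically as $(a)\Rightarrow(b)\Rightarrow(c)\Rightarrow(d)\Rightarrow(a)$.

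For $(a)\Rightarrow(b)$, from $ST=I$ I would use $\|x\|=\|STx\|\leq\|S\|\,\|Tx\|$, so $m(T)\geq 1/\|S\|>0$. For $(b)\Rightarrow(c)$, injectivity is immediate, and if $Tx_n\to y$, then the inequality $\|Tx_n-Tx_m\|\geq m(T)\,\|x_n-x_m\|$ forces $(x_n)$ to be Cauchy; its limit $x$ satisfies $Tx=y$, so $\mathrm{ran}(T)$ is closed. For $(c)\Rightarrow(d)$, the open mapping theorem applied to $T:H\to\mathrm{ran}(T)$ supplies a constant $c>0$ with $\|Tx\|\geq c\|x\|$, so $\langle T^{\ast}Tx,x\rangle=\|Tx\|^{2}\geq c^{2}\|x\|^{2}$, and the positive self-adjoint operator $T^{\ast}T$ is therefore bounded below and surjective, hence invertible. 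Finally, $(d)\Rightarrow(a)$ by taking $S=(T^{\ast}T)^{-1}T^{\ast}$, which is a bona fide left inverse.

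Since the lemma is labeled as well-known and the statement is cut off exactly where the equivalent conditions would be listed, I do not anticipate any substantive obstacle; the only judgement call is deciding which of the standard equivalent formulations the authors include (e.g.\ whether they list $T^{\ast}T$ invertible, or phrase $(c)$ as ``$T$ is bounded below,'' etc.). The cyclic scheme above covers all four formulations at once and can be trimmed to match whatever subset the authors state. The main step worth being careful with is $(c)\Rightarrow(d)$, where one must remember that closedness of $\mathrm{ran}(T)$ together with injectivity is exactly what allows the open mapping theorem to upgrade injectivity to a quantitative lower bound — this is the only non-purely-algebraic ingredient in the chain.
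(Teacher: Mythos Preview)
Your argument is correct. The paper actually supplies no proof of this lemma at all: it is introduced with the sentence ``The following lemma is a well-known result about left invertible operators'' and then simply lists the three conditions
\[
(1)\ m(T)>0,\qquad (2)\ T\text{ is left invertible},\qquad (3)\ \ker T=0\text{ and }\mathrm{ran}\,T\text{ is closed}.
\]
These are exactly your (b), (a), (c); the paper does not include your fourth condition $(d)$ on the invertibility of $T^{\ast}T$. Your cyclic proof $(a)\Rightarrow(b)\Rightarrow(c)\Rightarrow(d)\Rightarrow(a)$ therefore establishes the lemma as stated (and a bit more), whereas the authors simply invoke it as standard. Nothing needs to be fixed.
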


$(1)\ m(T):=\inf \{ \Vert Tx\Vert:\Vert x\Vert=1\}>0$ (i.e., $T$ is bounded from
below)$.$

$(2)$ $T$ is left invertible.

$(3)$ $\mathrm{\ker}T=0$ and $\mathrm{ran}T$ is closed.

\begin{lemma}
Suppose $T\in B(H)\backslash \{0\}$ and $m(T)=0.$ Then $\hat{\varepsilon}(T)=1.$
\end{lemma}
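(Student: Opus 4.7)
The goal is to show that when $m(T)=0$ and $T\neq 0$, for every $\varepsilon<1$ the operator $T$ fails to be $\varepsilon$-AOP, so $\hat{\varepsilon}(T)=1$. My plan is to exhibit a sequence of orthogonal pairs $a_n\perp b_n$ for which the ratio $|\langle Ta_n,Tb_n\rangle|/(\|Ta_n\|\,\|Tb_n\|)$ tends to $1$; this rules out any $\varepsilon<1$.

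The construction combines a near-kernel direction with a large-image direction. First, I would use $m(T)=0$ to select unit vectors $x_n$ with $\|Tx_n\|\to 0$. Second, I would produce unit vectors $y_n\perp x_n$ for which $\|Ty_n\|$ stays bounded away from $0$. To do this, fix a unit vector $u$ with $\|Tu\|\geq \|T\|/2>0$ and pass to a subsequence so that $\langle u,x_n\rangle\to c$ for some $|c|\leq 1$. If $|c|=1$, then $\|x_n-c u\|^2 = 2-2\,\mathrm{Re}(\bar c\,\langle u,x_n\rangle)\to 0$, forcing $Tx_n\to c\,Tu\neq 0$, contradicting $\|Tx_n\|\to 0$. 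Hence $|c|<1$, and
\[
y_n := \frac{u-\langle u,x_n\rangle x_n}{\|u-\langle u,x_n\rangle x_n\|}
\]
defines unit vectors orthogonal to $x_n$; a direct triangle-inequality estimate gives $\|Ty_n\|\to \|Tu\|>0$.

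Third, I set $a_n := x_n+y_n$ and $b_n := x_n-y_n$. Since $\|x_n\|=\|y_n\|=1$ and $x_n\perp y_n$, one has $\langle a_n,b_n\rangle = \|x_n\|^2-\|y_n\|^2 = 0$. Expanding,
\[
\langle Ta_n,Tb_n\rangle = \|Tx_n\|^2 - \|Ty_n\|^2 + \langle Ty_n,Tx_n\rangle - \langle Tx_n,Ty_n\rangle;
\]
the first term and the two cross terms vanish in the limit because $\|Tx_n\|\to 0$ while $\|Ty_n\|$ is bounded, so $|\langle Ta_n,Tb_n\rangle|\to \|Tu\|^2$. A parallel expansion shows $\|Ta_n\|\to \|Tu\|$ and $\|Tb_n\|\to \|Tu\|$. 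Therefore the ratio tends to $1$, and $\hat{\varepsilon}(T)=1$ follows.

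The main obstacle is the second step: extracting $y_n\perp x_n$ with $\|Ty_n\|$ uniformly positive. The worry is that $x_n$ might align with every direction on which $T$ is large, but the parallelogram identity rules out the degenerate case $|c|=1$ exactly because $\|Tx_n\|\to 0$ while $Tu\neq 0$. After this reduction, everything reduces to a routine limit computation using the orthogonality of $x_n$ and $y_n$.
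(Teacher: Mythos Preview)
Your argument is sound, and the construction is a clean, unified alternative to the paper's proof. One small slip: the limit of $\|Ty_n\|$ is not $\|Tu\|$ but $\|Tu\|/\sqrt{1-|c|^2}$, because the normalizing denominator $\|u-\langle u,x_n\rangle x_n\|=\sqrt{1-|\langle u,x_n\rangle|^2}$ tends to $\sqrt{1-|c|^2}$, not $1$. This does not affect the conclusion: writing $L=\|Tu\|/\sqrt{1-|c|^2}>0$, one still gets $|\langle Ta_n,Tb_n\rangle|\to L^2$ and $\|Ta_n\|,\|Tb_n\|\to L$, so the ratio tends to $1$. (Similarly, the argument that $|c|<1$ is most cleanly phrased via $\|x_n-\overline{\langle u,x_n\rangle}\,u\|^2=1-|\langle u,x_n\rangle|^2\to 0$, which forces $Tx_n\to \bar c\,Tu\neq 0$; your version has a harmless sign/conjugate slip.)

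The paper takes a different route. It splits into two cases: when $\ker T\neq 0$, it picks $e\in\ker T$ and a suitable $f$, sets $x=\langle e,f\rangle f$, $y=e-x$, and gets $Tx=-Ty$ exactly, so the ratio equals $1$ on the nose. When $\ker T=0$ but the range is not closed, it invokes the spectral measure of $|T|$ to produce near-kernel vectors $e_n$ and orthogonal vectors $f_n$ with $\|Tf_n\|$ close to $\|T\|$, then uses the same $\pm$ trick. Your approach avoids both the case split and the spectral theorem: a single Gram--Schmidt step against a fixed large-image direction $u$ handles everything. The paper's version is slightly sharper in the kernel case (exact equality rather than a limit) and controls $\|Tf_n\|$ explicitly, but your argument is more elementary and works uniformly.
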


\begin{proof}
Suppose $\ker T\neq0.$ Take any nonzero vector $e\in \ker T\ $and any unit
vector $f\notin \ker T\cup$ $\ker T^{\perp}.$ Let $x=$ $\langle e,f\rangle f,$
$y=e-x.$ Then $x\perp y$ and $Tx=-Ty,$ thus it follows from Cauchy-Schwarz
inequality that $\hat{\varepsilon}(T)=1.$

Now, we may assume $\ker T=0,$ but
ran $T$ is not closed. Let $E$ be the spectral measure of $|T|.$ Then for
every $\delta>0,$ $E[0,\delta)H$ is an infinite dimensional closed subspace
such that $\Vert Tx\Vert<\delta \Vert x\Vert$ for all $x\in$ $E[0,\delta]H.$
Let $\delta_{n}=\frac{\Vert T\Vert}{2n},\ M_{n}=E[0,\delta_{n})H,\ n\in%
\mathbb{N}
.$ Take unit vectors $e_{1}\in M_{1}$ and $f_{1}$ $\perp e_{1}.$ For $n\geq2,$
take unit vectors $e_{n}\in M_{n}\cap \{e_{1},e_{2},...,e_{n-1}\}^{\perp}$,
$f_{n}$ $\perp e_{n}$ with $\Vert Tf_{n}\Vert=\frac{2n-1}{2n}\Vert T\Vert.$
Let $x_{n}=e_{n}-f_{n},$ $y_{n}=e_{n}+f_{n}.$ Then $x_{n}\perp y_{n}$ , $\Vert
Tx_{n}\Vert \rightarrow \Vert T\Vert,$ $\Vert Ty_{n}\Vert \rightarrow \Vert
T\Vert,$ and $|\langle Tx_{n},Ty_{n}\rangle|\rightarrow \Vert T\Vert^{2},$ this
shows that $\hat{\varepsilon}(T)=1$ and the proof is completed.
\end{proof}

\begin{theorem}
\label{3}Suppose $T\in B(H)\backslash \{0\}$. Then
\[
\hat{\varepsilon}(T)=\frac{\left \Vert T\right \Vert ^{2}-m\left(  T\right)
^{2}}{\left \Vert T\right \Vert ^{2}+m\left(  T\right)  ^{2}}.
\]

\end{theorem}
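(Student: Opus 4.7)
The plan is to split on whether $m := m(T) = 0$. If $m = 0$, then Lemma 2.2 already yields $\hat{\varepsilon}(T) = 1 = (\|T\|^2 - 0)/(\|T\|^2 + 0)$, matching the claim. So the substance lies in the case $m > 0$. Writing $M := \|T\|$, the goal reduces to the two matching estimates
\[
\sup \frac{|\langle Tx, Ty\rangle|}{\|Tx\|\|Ty\|} \leq \frac{M^2 - m^2}{M^2 + m^2} \leq \sup \frac{|\langle Tx, Ty\rangle|}{\|Tx\|\|Ty\|},
\]
where both suprema run over nonzero orthogonal pairs.

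For the upper bound I would pass to $A := T^{\ast}T$, for which $\sigma(A) \subseteq [m^2, M^2]$, and use the identities $\|Tz\|^2 = \langle Az, z\rangle$ and $\langle Tx, Ty\rangle = \langle Ax, y\rangle$. Fix orthonormal $x, y$, let $V$ be the span of $\{x, y\}$, let $P$ be the orthogonal projection onto $V$, and set $B := PAP|_V$. The compression $B$ is self-adjoint, and by Rayleigh--Ritz its eigenvalues $\lambda_1, \lambda_2$ again lie in $[m^2, M^2]$. Reading off the $2 \times 2$ matrix of $B$ in the basis $\{x, y\}$, the diagonal entries are $\|Tx\|^2$ and $\|Ty\|^2$, the off-diagonal entry has squared modulus $|\langle Tx, Ty\rangle|^2$, and $\det B = \lambda_1 \lambda_2$, so $|\langle Tx, Ty\rangle|^2 = \|Tx\|^2 \|Ty\|^2 - \lambda_1 \lambda_2$. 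Combining this with the AM--GM bound $\|Tx\|^2 \|Ty\|^2 \leq ((\lambda_1 + \lambda_2)/2)^2$ (which uses the trace identity $\|Tx\|^2 + \|Ty\|^2 = \lambda_1 + \lambda_2$) gives
\[
\frac{|\langle Tx, Ty\rangle|^2}{\|Tx\|^2 \|Ty\|^2} \leq \left(\frac{\lambda_1 - \lambda_2}{\lambda_1 + \lambda_2}\right)^2 \leq \left(\frac{M^2 - m^2}{M^2 + m^2}\right)^2,
\]
the last step because $(\lambda_1, \lambda_2) \mapsto |\lambda_1 - \lambda_2|/(\lambda_1 + \lambda_2)$ is maximized on $[m^2, M^2]^2$ at the corner $(m^2, M^2)$.

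For the matching lower bound I would invoke the spectral measure $E$ of $A$. Both $m^2$ and $M^2$ belong to $\sigma(A)$, so for every sufficiently small $\eta > 0$ the projections $E([M^2 - \eta, M^2])$ and $E([m^2, m^2 + \eta])$ are nonzero. Pick unit vectors $u, v$ in their respective ranges: these are automatically orthogonal (the two spectral intervals are disjoint), and $\langle Tu, Tv\rangle = \langle Au, v\rangle = 0$ since each spectral subspace is $A$-invariant. Setting $x := u + v$ and $y := u - v$ gives $x \perp y$, $\langle Tx, Ty\rangle = \|Tu\|^2 - \|Tv\|^2$, and $\|Tx\|^2 = \|Ty\|^2 = \|Tu\|^2 + \|Tv\|^2$, with $\|Tu\|^2 \in [M^2 - \eta, M^2]$ and $\|Tv\|^2 \in [m^2, m^2 + \eta]$. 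Letting $\eta \to 0^+$ pushes the ratio $|\langle Tx, Ty\rangle|/(\|Tx\|\|Ty\|)$ to $(M^2 - m^2)/(M^2 + m^2)$, as required.

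The main technical hurdle is the upper bound: the key conceptual step is reducing an infinite-dimensional orthogonality estimate to a $2 \times 2$ compression problem via $B = PAP|_V$ and then quoting the Rayleigh--Ritz confinement $\lambda_i \in [m^2, M^2]$, after which only elementary AM--GM optimization remains. The lower bound is then a routine spectral-theorem construction that saturates the inequality, requiring no further machinery.
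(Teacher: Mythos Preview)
Your argument is correct, but it proceeds quite differently from the paper's. For the upper bound, the paper works directly with the parallelogram law: given orthonormal $x,y$ it rotates $y$ so that $\varepsilon_{xy}\ge 0$, writes $2-2\varepsilon_{xy}=\|\,Tx/\|Tx\|-Ty/\|Ty\|\,\|^{2}$, bounds this below by $m^{2}(\|Tx\|^{-2}+\|Ty\|^{-2})$, and then combines with the companion bound for the sum to force $\varepsilon_{xy}\le (t^{2}-m^{2})/(t^{2}+m^{2})$. You instead compress $A=T^{\ast}T$ to the plane $\mathrm{span}\{x,y\}$, read off the ratio from the trace and determinant of the $2\times 2$ block, and optimize $|\lambda_{1}-\lambda_{2}|/(\lambda_{1}+\lambda_{2})$ over the Rayleigh--Ritz box $[m^{2},M^{2}]^{2}$. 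For the lower bound, the paper starts from orthonormal $Th,Tk$ in the range, pulls back to $h,k$, and forms $h\pm\lambda k$; you go via the spectral measure of $A$, choosing $u,v$ in spectral bands near $M^{2}$ and $m^{2}$ and forming $u\pm v$.

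Your route is more structural: the $2\times 2$ compression makes transparent \emph{why} the extremal ratio depends only on the endpoints of $\sigma(A)$, and your lower bound is a clean saturation of that picture. The paper's route is more bare-handed, avoiding explicit spectral theory in the $m>0$ case and relying only on norm inequalities and the parallelogram identity. One small point you should state explicitly: when $m=M$ your spectral intervals cannot be made disjoint, but then the upper bound already gives $\hat{\varepsilon}(T)=0$ and there is nothing to prove.
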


\begin{proof}
Let $m=m(T),t=\Vert T\Vert.$ By the preceding lemma, we can easily see that
the equation holds when $m=0$.

Now, let's assume $m(T)>0.$ Notice that rank$T\geq2.$ Take two unit vectors
$Th$, $Tk$ with $Th\perp Tk$, then $h$ and $k$ are linearly independent, and
$\frac{1}{t}\leq \Vert h\Vert,\Vert k\Vert \leq \frac{1}{m}.$ Suppose $\langle
h,k\rangle=re^{i\theta},$ and $\lambda=\frac{\Vert h\Vert}{\Vert k\Vert
}e^{i\theta}.$ Then
\[
|\lambda|\in \lbrack m/t,t/m],\  \langle h,\lambda k\rangle \in%
\mathbb{R}
\  \text{and }h+\lambda k\perp h-\lambda k.
\]
We compute
\[
\langle T(h+\lambda k),T(h-\lambda k)\rangle=1-|\lambda|^{2},
\]%
\[
\Vert T(h+\lambda k)\Vert \Vert T(h-\lambda k)\Vert=1+|\lambda|^{2},
\]
so
\[
\hat{\varepsilon}(T)\geq{\sup}\left \{  \frac{|1-|\lambda|^{2}|}{1+|\lambda
|^{2}}:|\lambda|\in \lbrack m/t,t/m]\right \}  =\frac{t^{2}-m^{2}}{t^{2}+m^{2}}.
\]

Suppose $x\perp y_{0}$, $\Vert x\Vert=\Vert y_{0}\Vert=1$ and $\varepsilon
_{xy_{0}}=\langle \frac{Tx}{\Vert Tx\Vert},\frac{Ty_{0}}{\Vert Ty_{0}\Vert
}\rangle=|\varepsilon_{xy_{0}}|e^{i\theta}.$ Let $y=e^{i\theta}y_{0}.$ Then
$\varepsilon_{xy}=\langle \frac{Tx}{\Vert Tx\Vert},\frac{Ty}{\Vert Ty\Vert
}\rangle=|\varepsilon_{xy_{0}}|\  \geq0$ and
\begin{align*}
2-2\varepsilon_{xy} & =\Vert \frac{Tx}{\Vert Tx\Vert}-\frac{Ty}{\Vert Ty\Vert
}\Vert^{2}=\Vert T(\frac{x}{\Vert Tx\Vert}-\frac{y}{\Vert Ty\Vert})\Vert^{2}\\
&  \geq m^{2}\Vert \frac{x}{\Vert Tx\Vert}-\frac{y}{\Vert Ty\Vert}\Vert^{2}\\
& =m^{2}(\Vert Tx\Vert^{-2}+\Vert Ty\Vert^{-2}).
\end{align*}
Suppose $\Vert \frac{Tx}{\Vert Tx\Vert}-\frac{Ty}{\Vert Ty\Vert}\Vert
^{2}=\lambda m^{2}(\Vert Tx\Vert^{-2}+\Vert Ty\Vert^{-2})$ with $1\leq$
$\lambda \leq$ $\frac{t^{2}}{m^{2}}$. By the Parallelogram law,%
\[
\Vert \frac{Tx}{\Vert Tx\Vert}+\frac{Ty}{\Vert Ty\Vert}\Vert^{2}=4-\lambda
m^{2}(\Vert Tx\Vert^{-2}+\Vert Ty\Vert^{-2}).
\]
Notice that $\Vert \frac{Tx}{\Vert Tx\Vert}+\frac{Ty}{\Vert Ty\Vert}\Vert
^{2}=\Vert T(\frac{x}{\Vert Tx\Vert}+\frac{y}{\Vert Ty\Vert})\Vert^{2}\leq$
$t^{2}(\Vert Tx\Vert^{-2}+\Vert Ty\Vert^{-2}),$ we get
\[
4\leq(t^{2}+\lambda m^{2})(\Vert Tx\Vert^{-2}+\Vert Ty\Vert^{-2}),
\]
thus $$\Vert Tx\Vert^{-2}+\Vert Ty\Vert^{-2}\geq \frac{4}{t^{2}+\lambda m^{2}}.$$
Since $\lambda \geq1,$
\begin{align*}
2-2\varepsilon_{xy} & =\Vert \frac{Tx}{\Vert Tx\Vert}-\frac{Ty}{\Vert Ty\Vert
}\Vert^{2}=\lambda m^{2}(\Vert Tx\Vert^{-2}+\Vert Ty\Vert^{-2})\\
& =\geq \frac{4\lambda m^{2}}{t^{2}+\lambda m^{2}}\geq \frac{4m^{2}}{t^{2}%
+m^{2}},
\end{align*}
this proves $\varepsilon_{xy}\leq \frac{t^{2}-m^{2}}{t^{2}+m^{2}}$ and
therefore
\[
\hat{\varepsilon}(T)=\underset{x\perp y}{\sup}\varepsilon_{xy} \leq \frac
{t^{2}-m^{2}}{t^{2}+m^{2}}.
\]
The proof is completed.
\end{proof}

The following corollaries follow directly from the theorem.

\begin{corollary}
Suppose $T\in B(H).$ Then $T$ is an orthogonality preserving mapping if and
only if $T$ is a scalar multiple of an isometry.
\end{corollary}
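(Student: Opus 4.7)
The plan is to read off the corollary directly from Theorem 2.3 via the chain
\[
T \text{ is OP} \iff \hat{\varepsilon}(T)=0 \iff \|T\|=m(T),
\]
and then recognize the last condition as saying that $T/\|T\|$ is an isometry (modulo the trivial case $T=0$).

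First I would dispose of $T=0$, which is vacuously orthogonality preserving and equals $0\cdot V$ for any isometry $V$. For $T\in B(H)\setminus\{0\}$, by definition $T$ is OP exactly when it is $0$-AOP, i.e.\ when $\hat{\varepsilon}(T)=0$. Applying Theorem~\ref{3}, this is equivalent to
\[
\frac{\|T\|^{2}-m(T)^{2}}{\|T\|^{2}+m(T)^{2}}=0,
\]
which forces $m(T)=\|T\|$. Since $m(T)\le\|Tx\|\le\|T\|$ for every unit vector $x$, the equality $m(T)=\|T\|$ means $\|Tx\|=\|T\|\,\|x\|$ for all $x\in H$; thus $V:=T/\|T\|$ is an isometry and $T=\|T\|\,V$ is a scalar multiple of an isometry.

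For the converse, assume $T=\alpha V$ with $V$ an isometry and $\alpha\in\mathbb{C}$. If $\alpha=0$ there is nothing to show; otherwise $\|T\|=|\alpha|=m(T)$, so Theorem~\ref{3} yields $\hat{\varepsilon}(T)=0$, i.e.\ $T$ is OP. Alternatively, one checks directly that $\langle Tx,Ty\rangle=|\alpha|^{2}\langle x,y\rangle$, so $x\perp y$ implies $Tx\perp Ty$.

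There is no real obstacle here: the theorem has already absorbed the analytic content, and the corollary is essentially the boundary case $\hat{\varepsilon}=0$. The only thing to be a bit careful about is the passage from the scalar equality $m(T)=\|T\|$ to the pointwise identity $\|Tx\|=\|T\|\,\|x\|$, which however is immediate from the two-sided bound on $\|Tx\|/\|x\|$.
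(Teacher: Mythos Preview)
Your proof is correct and follows exactly the approach indicated by the paper, which simply states that the corollary follows directly from Theorem~\ref{3} without writing out any details. You have faithfully unpacked that direct implication, including the trivial $T=0$ case and the passage from $m(T)=\|T\|$ to $T/\|T\|$ being an isometry.
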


\begin{corollary}
Suppose $T\in B(H)\backslash \{0\}.$ Then there exists an $\varepsilon
\in \lbrack0,1)$ such that $T$ is an $\varepsilon$-$OP$ mapping if and only if
$T$ is bounded from below$.$ Moreover, $m(T)=\sqrt{\frac{1-\hat{\varepsilon
}(T)}{1+\hat{\varepsilon}(T)}}\Vert T\Vert.$
\end{corollary}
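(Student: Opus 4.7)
The plan is to deduce this corollary as an immediate algebraic consequence of Theorem \ref{3} together with the preceding lemma on operators bounded from below; there is essentially no independent work to do.

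For the equivalence, I would first observe that $T$ is $\varepsilon$-AOP for some $\varepsilon\in[0,1)$ if and only if $\hat{\varepsilon}(T)<1$. The direction $(\Leftarrow)$ is clear: pick any $\varepsilon\in(\hat{\varepsilon}(T),1)$ and use the monotonicity remark noted in the introduction. The direction $(\Rightarrow)$ is just the definition of the infimum. Now invoke Theorem \ref{3}: the formula $\hat{\varepsilon}(T)=(\|T\|^{2}-m(T)^{2})/(\|T\|^{2}+m(T)^{2})$ shows that $\hat{\varepsilon}(T)<1$ if and only if $m(T)>0$. Finally, by the first lemma of the section, $m(T)>0$ is equivalent to $T$ being bounded from below.

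For the \emph{moreover} part, I would simply solve the equation of Theorem \ref{3} for $m(T)$. Writing $\hat{\varepsilon}:=\hat{\varepsilon}(T)$ and cross-multiplying yields
\[
\hat{\varepsilon}\,\bigl(\|T\|^{2}+m(T)^{2}\bigr)=\|T\|^{2}-m(T)^{2},
\]
hence $(1+\hat{\varepsilon})\,m(T)^{2}=(1-\hat{\varepsilon})\,\|T\|^{2}$, and taking square roots (both sides being nonnegative) gives the stated formula
\[
m(T)=\sqrt{\frac{1-\hat{\varepsilon}(T)}{1+\hat{\varepsilon}(T)}}\,\|T\|.
\]

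There is no real obstacle. The only minor point worth checking is the edge case $m(T)=0$: there the equivalence in the first sentence says $T$ is not $\varepsilon$-AOP for any $\varepsilon<1$, which matches Lemma 2.2 giving $\hat{\varepsilon}(T)=1$, and the moreover formula degenerates to $0=0\cdot\|T\|$, consistent as stated (the formula is non-vacuous precisely in the bounded-below case).
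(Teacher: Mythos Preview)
Your proposal is correct and matches the paper's approach: the paper simply states that this corollary ``follow[s] directly from the theorem,'' and you have spelled out exactly that direct deduction from Theorem~\ref{3} (together with the definition of $m(T)$). There is nothing to add.
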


\begin{corollary}
Suppose $T\in B(H)\backslash \{0\}.$ Then $\hat{\varepsilon}\ $is continuous at
$T.$
\end{corollary}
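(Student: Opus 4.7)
The plan is to reduce continuity of $\hat{\varepsilon}$ at $T$ to the continuity of $\|\cdot\|$ and $m(\cdot)$, and then invoke the formula in Theorem \ref{3}. Since the norm is trivially Lipschitz on $B(H)$, the only nonroutine piece is verifying that the minimum modulus $m:B(H)\to[0,\infty)$ is continuous.

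First I would show that $m$ is in fact $1$-Lipschitz. For any unit vector $x\in H$ and any $S,T\in B(H)$, the reverse triangle inequality gives
\[
\bigl|\,\Vert Tx\Vert-\Vert Sx\Vert\,\bigr|\leq \Vert Tx-Sx\Vert\leq \Vert T-S\Vert.
\]
Taking the infimum over unit vectors on both sides yields $|m(T)-m(S)|\leq \Vert T-S\Vert$, so $m$ is continuous on all of $B(H)$.

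Next, given $T\in B(H)\setminus\{0\}$, choose a neighborhood of $T$ consisting of nonzero operators (e.g.\ the open ball of radius $\Vert T\Vert$ centered at $T$). On this neighborhood Theorem \ref{3} applies, giving
\[
\hat{\varepsilon}(S)=\frac{\Vert S\Vert^{2}-m(S)^{2}}{\Vert S\Vert^{2}+m(S)^{2}}
\]
for every such $S$. The real-valued function $(t,\mu)\mapsto \frac{t^{2}-\mu^{2}}{t^{2}+\mu^{2}}$ is continuous at every point with $t>0$, and by the first step the maps $S\mapsto \Vert S\Vert$ and $S\mapsto m(S)$ are continuous at $T$, with $\Vert T\Vert>0$. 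Composing these yields continuity of $\hat{\varepsilon}$ at $T$.

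There is no real obstacle here; the only subtle point is the Lipschitz estimate for $m$, which is easy once one notices that the pointwise estimate on unit vectors survives the infimum. Note also that this argument intrinsically requires $T\neq 0$: at the zero operator, $\hat{\varepsilon}(0)=1$ by convention while $\hat{\varepsilon}(S)$ may be anywhere in $[0,1)$ for small nonzero $S$, so continuity can fail only at $T=0$.
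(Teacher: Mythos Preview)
Your proof is correct and follows essentially the same route as the paper's: both use the formula from Theorem~\ref{3} together with the continuity of $\Vert\cdot\Vert$ and $m(\cdot)$ at $T$ (you are simply more explicit in verifying the $1$-Lipschitz property of $m$, which the paper takes for granted). One small correction to your closing remark: from the definition one has $\hat{\varepsilon}(0)=0$, not $1$; the discontinuity at $0$ arises because $\hat{\varepsilon}(\lambda T)=\hat{\varepsilon}(T)$ for every $\lambda\neq 0$, so one can send $T_n\to 0$ while $\hat{\varepsilon}(T_n)$ stays at a fixed nonzero value.
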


\begin{proof}
Suppose $\left \Vert T_{n}-T\right \Vert \rightarrow0.$ Since $T\neq0,$ we may
assume all the $T_{n}$ 's are not zero. Then $t_{n}=\Vert T_{n}\Vert
\  \rightarrow \Vert T\Vert=t$, $m_{n}=m(T_{n})\rightarrow m(T)=m,$ and
$t_{n}^{2}+m_{n}^{2}\neq0,$ therefore%
\[
\hat{\varepsilon}(T_{n})=\frac{t_{n}^{2}-m_{n}^{2}}{t_{n}^{2}+m_{n}^{2}%
}\rightarrow \frac{t^{2}-m^{2}}{t^{2}+m^{2}}=\hat{\varepsilon}(T).
\]

\end{proof}

\begin{remark}
The function $\hat{\varepsilon}\ $is not continuous at $0.$ Take any $T\ $with
$\hat{\varepsilon}(T)\neq0\  \ $and let $T_{n}=\frac{1}{n}T.$ Then $\Vert
T_{n}\Vert \rightarrow0,$ but for every $n,\  \hat{\varepsilon}(T_{n}%
)=\hat{\varepsilon}(T)\neq0.$
\end{remark}

\section{\large{\bf{A Distance Formula}}}

Let $\mathcal{V}$ be the set of all isometries, $%
\mathbb{C}
\mathcal{V}$ be the set of all scalar multiples of isometries.
Since $$\hat{\varepsilon}(T)=0 \Leftrightarrow T\in
\mathbb{C}
\mathcal{V\Leftrightarrow} \ \textrm{dist}(T,
\mathbb{C}
\mathcal{V})=0,$$ a natural question comes up. Suppose $\{T_{n}\}_{n=1}%
^{\infty}\subseteq B(H)$ and $\hat{\varepsilon}\left(  T_{n}\right)
\rightarrow0.$ Does that imply \textrm{dist}$(T_{n},%
\mathbb{C}
\mathcal{V})\rightarrow0?$ Unfortunately, the answer is negative.

\begin{example}
\label{example}For each $n\in%
\mathbb{N}
,$ let $T_{n}=\left(
\begin{array}
[c]{cc}%
n^{2} & 0\\
0 & n^{2}+n
\end{array}
\right)  \in M_{2}(%
\mathbb{R}
).$ Clearly, $$\hat{\varepsilon}\left(  T_{n}\right)  =\hat{\varepsilon}%
(\frac{T_{n}}{n^{2}})=\frac{(1+\frac{1}{n})^{2}-1}{(1+\frac{1}{n})^{2}%
+1}\rightarrow0.$$ We claim \textrm{dist}$(T_{n},%
\mathbb{C}
\mathcal{V})=\frac{n}{2}\rightarrow \infty.$ To see this, take an
isometry(unitary) matrix $U=\left(
\begin{array}
[c]{cc}%
a & b\\
c & d
\end{array}
\right)  \in M_{2}(%
\mathbb{R}
),\lambda \in%
\mathbb{R}
.$ Then
\begin{align*}
\Vert T_{n}-\lambda U\Vert & =\Vert \left(
\begin{array}
[c]{cc}%
n^{2}-\lambda a & -\lambda b\\
-\lambda c & (n^{2}+n)-\lambda d\\
&
\end{array}
\right)  \Vert \\
& \geq \max \{|n^{2}-\lambda a|,\ |(n^{2}+n)-\lambda d|\}.
\end{align*}
Since $U$ is an unitary, $|a|=|d|.$ If $ad<0,$ then
\[
\max \{|n^{2}-\lambda a|,\ |(n^{2}+n)-\lambda d|\} \geq n^{2}.
\]
If $ad>0,$ then $\lambda a=\lambda d$ and
\[
\max \{|n^{2}-\lambda a|,\ |(n^{2}+n)-\lambda d|\} \geq \frac{n}{2}.
\]
Consequently, $\mbox{dist}(T_{n},%
\mathbb{C}
\mathcal{V})\geq \frac{n}{2}.$ But $\Vert T_{n}-(n^{2}+\frac{n}{2})I\Vert
=\frac{n}{2},$ so $\mbox{dist} (T_{n},%
\mathbb{C}
\mathcal{V})=\frac{n}{2}.$
\end{example}

The above example gives us a way to compute the distance between some special
operators and $%
\mathbb{C}
\mathcal{V}$. In the following, we study the general distance formula.

\begin{theorem}
\bigskip Let $T$\ $\in B(H).$ Then
\[
\mathrm{dist}(T,%
\mathbb{C}
\mathcal{V})\geq \frac{\Vert T\Vert-m(T)}{2}.
\]

\end{theorem}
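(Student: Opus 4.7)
The plan is to fix an arbitrary $\lambda V \in \mathbb{C}\mathcal{V}$, with $V$ an isometry and $\lambda \in \mathbb{C}$, and show directly that $\|T-\lambda V\| \geq \frac{\|T\| - m(T)}{2}$; taking the infimum over all such $\lambda V$ then gives the theorem. The case $T = 0$ is trivial (both sides vanish), so I will assume $T \neq 0$.

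The heart of the argument is the reverse triangle inequality combined with the fact that $\|Vx\| = 1$ for every unit vector $x$, which gives
\[
\|(T - \lambda V)x\| \geq \bigl| \|Tx\| - |\lambda| \bigr|
\]
for every unit vector $x$. By the definition of the operator norm there is a sequence of unit vectors $(x_n)$ with $\|Tx_n\| \to \|T\|$, and by the definition of the minimum modulus there is a sequence $(y_n)$ with $\|Ty_n\| \to m(T)$. Substituting these into the displayed inequality and passing to the supremum yields
\[
\|T - \lambda V\| \geq \max\bigl(\|T\| - |\lambda|,\ |\lambda| - m(T)\bigr).
\]

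The final step is a one-line optimization: as functions of $|\lambda|$, the quantities $\|T\| - |\lambda|$ and $|\lambda| - m(T)$ are affine with opposite slopes and constant average $\tfrac{\|T\| - m(T)}{2}$, so their maximum is at least that average for every $|\lambda| \geq 0$. Hence $\|T - \lambda V\| \geq \tfrac{\|T\| - m(T)}{2}$ for every $\lambda V \in \mathbb{C}\mathcal{V}$, which is exactly what we want. I do not foresee any real obstacle here: the proof is the reverse triangle inequality plus a trivial min-max over a scalar. The only mild subtlety is that $\|T\|$ and $m(T)$ need only be approached by unit vectors rather than attained, but this is harmless in a supremum bound. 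It is worth noting in passing that equality is witnessed by taking $\lambda = \tfrac{\|T\|+m(T)}{2}$, consistent with the author's remark earlier in the introduction that the optimal scalar is $\tfrac{\|T\|+m(T)}{2}$ rather than $\|T\|$.
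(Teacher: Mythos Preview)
Your proposal is correct and follows essentially the same approach as the paper: reverse triangle inequality on unit vectors to get $\|T-\lambda V\|\ge\max\{\|T\|-|\lambda|,\,|\lambda|-m(T)\}$, then the elementary observation that this maximum is minimized (and equals $\tfrac{\|T\|-m(T)}{2}$) at $|\lambda|=\tfrac{\|T\|+m(T)}{2}$. The paper phrases the last step as computing $\inf_{\lambda}\max\{\cdot,\cdot\}$ directly rather than via the ``average of affine functions'' observation, but this is purely cosmetic.
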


\begin{proof}
Let \ $\lambda \in%
\mathbb{C}
,V$\ $\in \mathcal{V},\ x\in H\ $with $\Vert x\Vert=1.\ $Then \  \
\[
\Vert T-\lambda V\Vert \  \geq \Vert Tx-\lambda Vx\Vert \  \geq|\Vert
Tx\Vert-|\lambda||,\
\]
so
\[
\Vert T-\lambda V\Vert \text{ }\geq \underset{\Vert x\Vert=1}{\sup}|\  \Vert
Tx\Vert-|\lambda|\ |\text{ }\geq \max \{ \Vert T\Vert-|\lambda|,|\lambda
|-m(T)\},
\]
therefore%
\[
\mathrm{dist}(T,%
\mathbb{C}
\mathcal{V})=\underset{\lambda V\in%
\mathbb{C}
\mathcal{V}}{\inf}\Vert T-\lambda V\Vert \text{ }\geq \underset{\lambda \in%
\mathbb{C}
}{\text{ }\inf}\max \{ \Vert T\Vert-|\lambda|,|\lambda|-m(T)\}=\frac{\Vert
T\Vert-m(T)}{2}.
\]

\end{proof}

\begin{theorem}
Suppose $T\in B(H)$ and $\dim \ker T\leq \dim \ker T^{\ast}.$ Then
\[
\mathrm{dist}(T,%
\mathbb{C}
\mathcal{V})\leq \frac{\Vert T\Vert-m(T)}{2}.
\]

\end{theorem}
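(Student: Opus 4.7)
The plan is to construct an explicit scalar multiple of an isometry close to $T$, following the hint from the introduction: take $\lambda = \frac{\|T\|+m(T)}{2}$ and choose the isometry from a polar decomposition. Write $T=V|T|$ as the polar decomposition, so $V$ is a partial isometry with initial space $(\ker T)^{\perp}$ and final space $\overline{\mathrm{ran}\,T}$. The hypothesis $\dim\ker T\le\dim\ker T^{\ast}$ gives room for an extension: since $\ker T^{\ast}=(\mathrm{ran}\,T)^{\perp}$, we can fix an isometric embedding $W\colon \ker T\to \ker T^{\ast}$ and define $\tilde V=V\oplus W$ relative to the decomposition $H=(\ker T)^{\perp}\oplus\ker T$. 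Then $\tilde V$ is an isometry on $H$ extending $V$.

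Next, I would estimate $\|T-\lambda \tilde V\|$ by decomposing an arbitrary unit vector $x=x_{1}+x_{2}$ with $x_{1}\in\ker T$ and $x_{2}\in(\ker T)^{\perp}$. Since $Tx_{1}=0$, $Tx_{2}=V|T|x_{2}$, and $\tilde V x_{1}\in\ker T^{\ast}$ is orthogonal to $V((\ker T)^{\perp})\subseteq\overline{\mathrm{ran}\,T}$, the vector $(T-\lambda\tilde V)x$ splits orthogonally:
\[
\|(T-\lambda\tilde V)x\|^{2}=\bigl\| (|T|-\lambda I)x_{2}\bigr\|^{2}+|\lambda|^{2}\|x_{1}\|^{2}.
\]
Spectral calculus for the self-adjoint operator $|T|$ on $(\ker T)^{\perp}$, whose spectrum lies in $[m(T),\|T\|]$, gives $\bigl\| (|T|-\lambda I)x_{2}\bigr\|\le\frac{\|T\|-m(T)}{2}\|x_{2}\|$ by our choice of $\lambda$ as the midpoint of this interval.

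The remaining point, which is the only subtlety, is to control the term $|\lambda|^{2}\|x_{1}\|^{2}$, since $|\lambda|=\frac{\|T\|+m(T)}{2}$ is strictly larger than $\frac{\|T\|-m(T)}{2}$ whenever $m(T)>0$. Here I would use the dichotomy built into the definition of $m(T)$: if $m(T)>0$ then by the preliminary lemma $\ker T=0$, so $x_{1}=0$ and the second term drops out; if $m(T)=0$ then $|\lambda|=\frac{\|T\|}{2}=\frac{\|T\|-m(T)}{2}$, so both coefficients match. In either case,
\[
\|(T-\lambda\tilde V)x\|^{2}\le\left(\frac{\|T\|-m(T)}{2}\right)^{2}\bigl(\|x_{1}\|^{2}+\|x_{2}\|^{2}\bigr)=\left(\frac{\|T\|-m(T)}{2}\right)^{2},
\]
which yields $\mathrm{dist}(T,\mathbb{C}\mathcal{V})\le\|T-\lambda\tilde V\|\le\frac{\|T\|-m(T)}{2}$.

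The main obstacle is the extension step: without the hypothesis $\dim\ker T\le\dim\ker T^{\ast}$ the partial isometry $V$ cannot be extended to an isometry on $H$, which is precisely why the result fails in general. The rest is a straightforward spectral estimate, plus the clean case split coming from the fact that $m(T)$ can only be positive when $\ker T$ is trivial.
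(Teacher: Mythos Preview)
Your proof is correct and follows the same approach as the paper: extend the partial isometry from the polar decomposition to an isometry $\tilde V$ using the hypothesis $\dim\ker T\le\dim\ker T^{\ast}$, and take $\lambda=\frac{\|T\|+m(T)}{2}$. The paper's execution is shorter: once $T=\tilde V|T|$ with $\tilde V$ an isometry, one writes $\|T-\lambda\tilde V\|=\|\tilde V(|T|-\lambda)\|=\||T|-\lambda\|=\frac{\|T\|-m(T)}{2}$ directly from $\sigma(|T|)\subseteq[m(T),\|T\|]$, so your orthogonal decomposition and the $m(T)>0$ versus $m(T)=0$ case split are unnecessary.
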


\begin{proof}
Since $\dim \ker T\leq \dim \ker T^{\ast},$ $T=V|T|$ for some isometry $V.$
Let $\lambda=\frac{\Vert T\Vert+m(T)}{2}.$ Then
\[
\Vert T-\lambda V\Vert=\Vert V(|T|-\lambda)\Vert=\Vert|T|-\lambda \Vert
= \frac{\Vert T\Vert-m(T)}{2}.
\]

\end{proof}

\begin{remark}
If we rewrite $\frac{\Vert T\Vert-m(T)}{2}=\frac{\Vert T\Vert-m(T)}{\Vert
T\Vert+m(T)}\cdot \frac{\Vert T\Vert+m(T)}{2},$ then it follows from proof of
the preceding theorem that
\[
\Vert T-\lambda V\Vert=\frac{\Vert T\Vert-m(T)}{2}=\frac{\Vert T\Vert
-m(T)}{\Vert T\Vert+m(T)}\cdot\frac{\Vert T\Vert+m(T)}{2}%
\]%
\[
=\frac{1-\sqrt{\frac{1-\hat{\varepsilon}(T)}{1+\hat{\varepsilon}(T)}}}%
{1+\sqrt{\frac{1-\hat{\varepsilon}(T)}{1+\hat{\varepsilon}(T)}}}\cdot\min \{ \Vert
T\Vert,\Vert \lambda V\Vert \},
\]
and clearly, $$\frac{1-\sqrt{\frac{1-\hat{\varepsilon}(T)}{1+\hat{\varepsilon
}(T)}}}{1+\sqrt{\frac{1-\hat{\varepsilon}(T)}{1+\hat{\varepsilon}(T)}}%
}<1-\sqrt{\frac{1-\hat{\varepsilon}(T)}{1+\hat{\varepsilon}(T)}},$$ this is a
sharper result than \cite[Theorem 2.3]{Tur}.
\end{remark}

\begin{theorem}
\label{4}Let $T\in B(H).$ If $\dim \ker T\leq \dim \ker T^{\ast},$ then
\[
\mathrm{dist}(T,%
\mathbb{C}
\mathcal{V})=\frac{\Vert T\Vert-m(T)}{2}=\frac{1-\sqrt{\frac{1-\hat
{\varepsilon}(T)}{1+\hat{\varepsilon}(T)}}}{2}\Vert T\Vert.
\]
\end{theorem}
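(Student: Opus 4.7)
The plan is to assemble the two displayed equalities from results already established in the paper. For the first equality, $\mathrm{dist}(T,\mathbb{C}\mathcal{V}) = (\Vert T\Vert - m(T))/2$, I would simply combine the two preceding theorems: the unconditional lower bound gives $\mathrm{dist}(T,\mathbb{C}\mathcal{V}) \geq (\Vert T\Vert - m(T))/2$, while the conditional upper bound supplies the reverse inequality under exactly the hypothesis $\dim \ker T \leq \dim \ker T^{\ast}$ that is assumed here. So the first equality is immediate.

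For the second equality, I would invoke Theorem \ref{3}, which asserts $\hat{\varepsilon}(T) = (\Vert T\Vert^{2} - m(T)^{2})/(\Vert T\Vert^{2} + m(T)^{2})$ whenever $T \neq 0$. A one-line algebraic manipulation yields $m(T) = \sqrt{(1-\hat{\varepsilon}(T))/(1+\hat{\varepsilon}(T))}\,\Vert T\Vert$; substituting this into $(\Vert T\Vert - m(T))/2$ and pulling $\Vert T\Vert/2$ out produces the right-hand side exactly. The degenerate case $T = 0$ is handled in one sentence, since $0 \in \mathbb{C}\mathcal{V}$ and $\Vert T\Vert = m(T) = 0$ collapse all three expressions to $0$.

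The only substantive work has already been carried out, namely in Theorem \ref{3} and in the upper-bound theorem just above. In particular, the hypothesis $\dim \ker T \leq \dim \ker T^{\ast}$ is genuinely used there: it is precisely the condition that allows the partial isometry in the polar decomposition $T = V|T|$ to be extended to a (global) isometry belonging to $\mathcal{V}$, whereupon the choice $\lambda = (\Vert T\Vert + m(T))/2$ achieves $\Vert T - \lambda V\Vert = \Vert\,|T| - \lambda\Vert = (\Vert T\Vert - m(T))/2$. Given these two inputs, Theorem \ref{4} is just a synthesis plus a cosmetic algebraic rewrite, so there is no real obstacle to overcome; the proof should take only a few lines.
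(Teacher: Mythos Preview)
Your proposal is correct and follows exactly the paper's approach: the paper's own proof of this theorem reads simply ``Combine Theorems 3.2 and 3.3,'' which is precisely your first-equality argument, and the second equality is the algebraic rewrite via Theorem~\ref{3} (equivalently Corollary~2.5) that you describe. Your handling of the degenerate case $T=0$ and the explanation of why the hypothesis is needed are accurate elaborations that the paper leaves implicit.
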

\begin{proof}
Combine Theorems 3.2 and 3.3.

\end{proof}

Recall that \textrm{ind}$T=\dim \ker T-\dim \ker T^{\ast}.$ If $H$ is separable, it
is known that the closure of the set of $T\in B(H)$ with \textrm{ind}%
$T\leq0$ is the complement of the set of \emph{semi-Fredholm operators} $T$
with \textrm{ind}$T>0$ (i.e., $\mathrm{ran}T$ is closed, $\dim \ker T^{\ast
}<\infty \ $and $\dim \ker T>\dim \ker T^{\ast}$). Since the distance function is
continuous, we get the following corollary.

\begin{corollary}
Suppose $H$ is separable and $T\in B(H).$ If \textrm{ran}$T$ is not closed or
\textrm{ind}$T\leq0,$ then
\[
\mathrm{dist}(T,%
\mathbb{C}
\mathcal{V})=\frac{\Vert T\Vert-m(T)}{2}=\frac{1-\sqrt{\frac{1-\hat
{\varepsilon}(T)}{1+\hat{\varepsilon}(T)}}}{2}\Vert T\Vert.
\]

\end{corollary}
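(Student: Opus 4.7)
The proof proposal has two parts. First, the second equality
\[
\frac{\|T\|-m(T)}{2}=\frac{1-\sqrt{\frac{1-\hat\varepsilon(T)}{1+\hat\varepsilon(T)}}}{2}\|T\|
\]
is immediate once I substitute the relation $m(T)=\sqrt{\frac{1-\hat\varepsilon(T)}{1+\hat\varepsilon(T)}}\|T\|$ from Theorem \ref{3} (and is trivially valid when $T=0$). So the content of the corollary is the first equality $\mathrm{dist}(T,\mathbb{C}\mathcal{V})=\frac{\|T\|-m(T)}{2}$ under the stated hypothesis on $T$.

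Next I would split into two cases. The case $\mathrm{ind}\,T\le 0$ means exactly $\dim\ker T\le\dim\ker T^{*}$, so it is literally Theorem \ref{4}. The remaining case is $\mathrm{ran}\,T$ not closed, and here I would invoke the density statement recorded just before the corollary: an operator $T$ with non-closed range is not semi-Fredholm, hence a fortiori not semi-Fredholm with positive index, so it lies in the norm closure of $\{S\in B(H):\mathrm{ind}\,S\le 0\}$. Pick a sequence $T_n\to T$ with $\mathrm{ind}\,T_n\le 0$. For each $n$ Theorem \ref{4} gives
\[
\mathrm{dist}(T_n,\mathbb{C}\mathcal{V})=\frac{\|T_n\|-m(T_n)}{2}.
\]
Then I let $n\to\infty$ on both sides.

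The only thing to check is that the three scalar quantities involved are continuous in $T$ in operator norm. The norm $\|\cdot\|$ and the distance $\mathrm{dist}(\cdot,\mathbb{C}\mathcal{V})$ are $1$-Lipschitz by the triangle inequality. For the minimum modulus, if $\|S-T\|<\eta$ then for every unit vector $x$ one has $\|Sx\|\ge\|Tx\|-\eta\ge m(T)-\eta$, giving $m(S)\ge m(T)-\eta$; symmetry gives the other inequality, so $m$ is also $1$-Lipschitz. Passing to the limit in the displayed equality therefore yields the desired formula for $T$.

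The main obstacle, if any, is simply being careful about the logical structure: the case $\mathrm{ind}\,T\le 0$ includes operators whose range is not closed (for instance when $\ker T=\ker T^{*}=0$ and $\mathrm{ran}\,T$ is dense but not equal to $H$), so the two cases in the hypothesis overlap — but this is harmless because Theorem \ref{4} already covers the overlap directly. Everything else is a routine continuity argument, so no deep additional idea is needed beyond the quoted density fact and the $1$-Lipschitz continuity of $\|T\|$, $m(T)$, and $\mathrm{dist}(T,\mathbb{C}\mathcal{V})$.
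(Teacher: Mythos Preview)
Your proposal is correct and follows the same route as the paper: invoke Theorem \ref{4} on the set $\{S:\dim\ker S\le\dim\ker S^{*}\}$ and then pass to its norm closure via the density fact stated just before the corollary, using continuity of $\mathrm{dist}(\cdot,\mathbb{C}\mathcal{V})$, $\|\cdot\|$, and $m(\cdot)$. The paper compresses this into a single sentence about continuity of the distance function, whereas you spell out the Lipschitz estimates and the case split explicitly, but the argument is the same.
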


\begin{remark}
In this case, if \textrm{ran}$T$ is not closed, then $m(T)=0;$ hence
$$\mbox{dist}(T,
\mathbb{C}
\mathcal{V})=\frac{\Vert T\Vert}{2}.$$
\end{remark}

\begin{proposition}
Suppose $T\in B(H)$ is right invertible but not invertible. Then
\[
0<m(T^{\ast})\leq \mathrm{dist}(T,%
\mathbb{C}
\mathcal{V})\leq \Vert T\Vert.
\]

\end{proposition}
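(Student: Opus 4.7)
The plan is to dispose of the two outer inequalities quickly and focus on the middle one. For the upper bound, $0=0\cdot V$ lies in $\mathbb{C}\mathcal{V}$ for any $V\in\mathcal{V}$, so $\mathrm{dist}(T,\mathbb{C}\mathcal{V})\leq\Vert T\Vert$. For the positivity, right invertibility $TR=I$ yields $R^{*}T^{*}=I$ upon taking adjoints, so $T^{*}$ is left invertible; by Lemma 2.1 this is equivalent to $m(T^{*})>0$.

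The substantive task is to show $\Vert T-\lambda V\Vert\geq m(T^{*})$ for every scalar $\lambda$ and every isometry $V$. I would split into cases according to whether $V$ is unitary. If $V$ is a non-unitary isometry, then $\ker V^{*}\neq 0$, so picking a unit vector $u\in\ker V^{*}$ gives $(T-\lambda V)^{*}u=T^{*}u-\bar\lambda V^{*}u=T^{*}u$, whence $\Vert T-\lambda V\Vert=\Vert(T-\lambda V)^{*}\Vert\geq\Vert T^{*}u\Vert\geq m(T^{*})$.

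When $V$ is unitary, I would factor $T-\lambda V=(TV^{*}-\lambda I)V$ and use unitary invariance to reduce the estimate to $\Vert T_{0}-\lambda I\Vert$, where $T_{0}:=TV^{*}$ is again right invertible but not invertible and satisfies $m(T_{0}^{*})=m(VT^{*})=m(T^{*})$. The key claim is the spectral inclusion
\[
\{\mu\in\mathbb{C}:|\mu|\leq m(T^{*})\}\subseteq\sigma(T_{0}).
\]
Granted this, the spectral-radius bound yields $\Vert T_{0}-\lambda I\Vert\geq\sup_{|\mu|\leq m(T^{*})}|\mu-\lambda|=m(T^{*})+|\lambda|\geq m(T^{*})$.

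The main obstacle is establishing the spectral inclusion, which I would prove via semi-Fredholm theory. For $|\mu|<m(T^{*})$, the reverse triangle inequality makes $T_{0}^{*}-\bar\mu I$ bounded below, so $T_{0}-\mu I$ is right invertible and in particular semi-Fredholm with $\dim\ker(T_{0}-\mu I)^{*}=0$. On the connected open disc $\{|\mu|<m(T^{*})\}$ containing $0$, local constancy of the semi-Fredholm index forces $\mathrm{ind}(T_{0}-\mu I)=\mathrm{ind}(T_{0})=\dim\ker T_{0}>0$, so $\ker(T_{0}-\mu I)\neq 0$ and $\mu\in\sigma_{p}(T_{0})$. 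Closedness of the spectrum then delivers the full closed disc. This Fredholm-theoretic input appears unavoidable: a purely metric argument only yields $\Vert T_{0}-\lambda I\Vert\geq\max\{|\lambda|,\,m(T^{*})-|\lambda|\}\geq m(T^{*})/2$, off by a factor of two from the target.
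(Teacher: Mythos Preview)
Your argument is correct, but the paper takes a much shorter, more elementary route that avoids any case split or Fredholm theory. The paper chooses a right inverse $S$ of $T$ with $\Vert S\Vert^{-1}=m(T^{*})$ (e.g.\ $S=T^{*}(TT^{*})^{-1}$), writes $(T-\lambda V)S=I-\lambda VS$, and observes that $\Vert I-\lambda VS\Vert\geq 1$: otherwise $\lambda VS$ would be invertible by the Neumann series, forcing the isometry $V$ to be surjective and hence $S$ invertible, contradicting the non-invertibility of $T$. Then $\Vert T-\lambda V\Vert\geq \Vert I-\lambda VS\Vert\,\Vert S\Vert^{-1}\geq m(T^{*})$ in one stroke, for every isometry $V$ and every scalar $\lambda$.

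By contrast, you split into the non-unitary and unitary cases and, in the latter, invoke local constancy of the semi-Fredholm index to show that the closed disc of radius $m(T^{*})$ lies in $\sigma(TV^{*})$; this is correct but decidedly heavier than the paper's two-line Neumann-series trick. One small dividend of your approach is that in the unitary case you actually obtain the stronger estimate $\Vert T-\lambda V\Vert\geq m(T^{*})+|\lambda|$, which the paper's argument does not yield. So your method trades simplicity for a bit of extra information.
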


\begin{proof}
Suppose $TS=1$ for some $S\in B(H).$ Then $\Vert S\Vert^{-1}=m(T^{\ast}).$ Let
$\lambda V\in%
\mathbb{C}
\mathcal{V}$. Then
\[
\Vert T-\lambda V\Vert \geq \Vert1-\lambda VS\Vert \cdot \Vert S\Vert^{-1}%
=\Vert1-\lambda VS\Vert \cdot \ m(T^{\ast}).
\]
If $\Vert1-\lambda VS\Vert \ <1,$ we see that $VS$ is invertible, so $V$ is
invertible and therefore $S$ is invertible, impossible. Clearly,
$\mathrm{dist}(T,%
\mathbb{C}
\mathcal{V})\leq \Vert T\Vert.$ The proof is complete.
\end{proof}

\begin{remark}
If $T$ is a scalar multiple of a nonunitary co-isometry, then $m(T^{\ast
})=\Vert T\Vert$ and therefore $\mathrm{dist}(T,%
\mathbb{C}
\mathcal{V})=\Vert T\Vert.$
\end{remark}

Even Example \ref{example} tells us $\mathrm{dist}(T_{n},%
\mathbb{C}
\mathcal{V})$ may not converge to $0$ in the case that $\hat{\varepsilon
}\left(  T_{n}\right)  \rightarrow0,$ we still have the following result.

\begin{theorem}
Suppose $\{T_{n}\}_{n=1}^{\infty}\subseteq B(H)\backslash \{0\}$ and
$\hat{\varepsilon}\left(  T_{n}\right)  \rightarrow0.$ Then
\[
\mathrm{dist}(\frac{T_{n}}{\Vert T_{n}\Vert},\mathcal{V})\rightarrow0.
\]

\end{theorem}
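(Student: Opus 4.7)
The plan is to normalize and then invoke the polar decomposition together with Theorem 2.3. First I would let $S_n = T_n/\Vert T_n\Vert$, so $\Vert S_n\Vert = 1$. Since $\hat{\varepsilon}$ is clearly invariant under nonzero scalar multiplication (orthogonality is unaffected by scaling, and both sides of the defining inequality scale the same way), $\hat{\varepsilon}(S_n) = \hat{\varepsilon}(T_n) \to 0$. Applying Theorem \ref{3} (or the formula in Corollary 2.5) to $S_n$ gives
\[
m(S_n) = \sqrt{\frac{1-\hat{\varepsilon}(S_n)}{1+\hat{\varepsilon}(S_n)}}\,\Vert S_n\Vert \longrightarrow 1.
\]

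Next I would observe that for $n$ large enough, $m(S_n) > 0$, so $\ker S_n = \{0\}$. Hence in the polar decomposition $S_n = V_n|S_n|$, the initial space $\overline{\operatorname{ran}|S_n|} = (\ker S_n)^\perp$ is all of $H$, so the partial isometry $V_n$ is actually an isometry; equivalently, the condition $\dim\ker S_n \leq \dim\ker S_n^\ast$ of Theorem 3.3 is trivially met. The key computation is then the same one used in that theorem with the scalar $\lambda = 1$:
\[
\Vert S_n - V_n\Vert = \Vert V_n(|S_n| - I)\Vert = \bigl\Vert |S_n| - I \bigr\Vert.
\]
Since $|S_n|$ is positive with $\sigma(|S_n|) \subseteq [m(S_n),\Vert S_n\Vert] = [m(S_n),1]$, the spectral radius of $|S_n| - I$ is $1 - m(S_n)$, hence $\Vert |S_n| - I\Vert = 1 - m(S_n)$.

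Putting the pieces together,
\[
\mathrm{dist}\!\left(\frac{T_n}{\Vert T_n\Vert},\,\mathcal{V}\right) \leq \Vert S_n - V_n\Vert = 1 - m(S_n) \longrightarrow 0,
\]
which is the desired conclusion. I do not anticipate a real obstacle here: the only slightly delicate point is confirming that the partial isometry produced by the polar decomposition is a genuine isometry, but this is automatic once $m(S_n) > 0$ forces $\ker S_n = \{0\}$. Everything else is a direct application of Theorem \ref{3} and the standard spectral bound on $\Vert |S_n| - I\Vert$.
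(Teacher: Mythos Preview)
Your proof is correct and follows essentially the same route as the paper: normalize, use Theorem~\ref{3} to force $m(S_n)\to 1$, then take the polar decomposition and estimate $\lVert S_n - V_n\rVert = \lVert\,|S_n| - I\,\rVert$. If anything, you are slightly more careful than the paper in explicitly checking that $V_n$ is a genuine isometry (via $m(S_n)>0\Rightarrow\ker S_n=\{0\}$) and in noting that the spectral bound gives an equality $\lVert\,|S_n|-I\,\rVert = 1 - m(S_n)$ rather than merely an inequality.
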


\begin{proof}
We may assume all the $\hat{\varepsilon}\left(  T_{n}\right)  $'s are smaller
than 1. For each $n,$ let $t_{n}^{{}}=\Vert T_{n}\Vert$ and $m_{n}=m(T_{n}).$
Since $$\hat{\varepsilon}\left(  T_{n}\right)  =\frac{t_{n}^{2}-m_{n}^{2}}%
{t_{n}^{2}+m_{n}^{2}}=\frac{1-\frac{m_{n}^{2}}{t_{n}^{2}}}{1+\frac{m_{n}^{2}%
}{t_{n}^{2}}}\rightarrow0,$$ it follow from Theorem 2.3 that $\frac{m_{n}^{{}}}{t_{n}^{{}}%
}\rightarrow1.$ Let $T_{n}=V_{n}|T_{n}|$ be the polar decomposition of each
$T_{n}$. We see
\[
\Vert \frac{T_{n}}{\Vert T_{n}\Vert}-V_{n}\Vert=\Vert \frac{|T_{n}|}{\Vert
T_{n}\Vert}-I\Vert \leq1-\frac{m_{n}^{{}}}{t_{n}^{{}}}\rightarrow0,
\]
$\ $ and therefore $\mathrm{dist}(\frac{T_{n}}{\Vert T_{n}\Vert},\mathcal{V}%
)\rightarrow0.$
\end{proof}

\begin{remark}
We keep the same notations as in the proof of the above theorem. Actually,
for all $\lambda_{n}\in \lbrack m_{n},t_{n}],$ $\mathrm{dist}(\frac{T_{n}%
}{\lambda_{n}},\mathcal{V})\rightarrow0.$ This is because for each
$\lambda_{n}\in \lbrack m_{n},t_{n}],$%
\[
\Vert T_{n}-\lambda_{n}V_{n}\Vert=\Vert|T_{n}|-\lambda_{n}\Vert \leq
t_{n}-m_{n},
\]
so we have
\[
\Vert \frac{T_{n}}{\lambda_{n}}-V_{n}\Vert \leq \frac{t_{n}-m_{n}}{\lambda_{n}%
}\leq \frac{t_{n}}{m_{n}}-1\rightarrow0.
\]

\end{remark}

\begin{proposition}
Suppose $S,T\in B(H).$ Then

$(1)$ If $S,T\neq0,\ $then $$\hat{\varepsilon}(ST)\leq \frac{t_{1}^{2}t_{2}%
^{2}-m_{1}^{2}m_{2}^{2}}{t_{1}^{2}t_{2}^{2}+m_{1}^{2}m_{2}^{2}},$$ where $\Vert
S\Vert=t_{1,}$ $m(S)=m_{1,}$ $\Vert T\Vert=t_{2},$ $m(T)=m_{2}.$

$(2)$ If $0\neq S$ $\in$ $%
\mathbb{C}
\mathcal{V},$ then $\hat{\varepsilon}(ST)=\hat{\varepsilon}(T).$

$(3)$ If $T\in$ $%
\mathbb{C}
\mathcal{V},$ then $\hat{\varepsilon}(ST)\leq \hat{\varepsilon}(S).$
\end{proposition}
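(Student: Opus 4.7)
The plan is to reduce each of the three assertions to the exact formula $\hat{\varepsilon}(T)=\varphi(\Vert T\Vert,m(T))$ established in Theorem \ref{3}, where $\varphi(a,b):=\frac{a^{2}-b^{2}}{a^{2}+b^{2}}$. A short partial-derivative computation shows that $\varphi$ is nondecreasing in $a$ and nonincreasing in $b$ on $\{(a,b):a\ge b\ge 0,\ a>0\}$. Consequently, the task in each part becomes the familiar one of bounding the operator norm of a composition from above and its minimum modulus from below, then invoking monotonicity.

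For Part (1), I would begin with the standard estimates $\Vert ST\Vert\le t_{1}t_{2}$ and $\Vert STx\Vert\ge m(S)\Vert Tx\Vert\ge m_{1}m_{2}\Vert x\Vert$, the latter giving $m(ST)\ge m_{1}m_{2}$. Whenever $m_{1}m_{2}>0$ the product $ST$ is nonzero and Theorem \ref{3} together with the monotonicity of $\varphi$ deliver the stated bound at once; when $m_{1}m_{2}=0$ the right-hand side of the claimed inequality is $1$, so the bound is vacuous.

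For Part (2), I would write $S=\alpha V$ with $\alpha\in\mathbb{C}\setminus\{0\}$ and $V$ an isometry. Since $V$ preserves norms, $\Vert STx\Vert=|\alpha|\Vert Tx\Vert$ for every $x\in H$, so $\Vert ST\Vert=|\alpha|\Vert T\Vert$ and $m(ST)=|\alpha|m(T)$. The factor $|\alpha|^{2}$ cancels inside $\varphi$, and Theorem \ref{3} applied to both $ST$ and $T$ yields $\hat{\varepsilon}(ST)=\hat{\varepsilon}(T)$ (with $T=0$ trivial). Part (3) is handled by the same kind of reduction: writing $T=\beta V$ with $V$ an isometry, we have $ST=\beta SV$, so up to the scalar $|\beta|^{2}$ (which $\varphi$ ignores) it suffices to prove $\hat{\varepsilon}(SV)\le \hat{\varepsilon}(S)$. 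The isometry property gives $\Vert SVx\Vert\le \Vert S\Vert\Vert x\Vert$ and $\Vert SVx\Vert\ge m(S)\Vert x\Vert$, hence $\Vert SV\Vert\le\Vert S\Vert$ and $m(SV)\ge m(S)$, and monotonicity of $\varphi$ closes the argument.

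The only obstacle I foresee is bookkeeping around the degenerate cases $ST=0$, $SV=0$, or $T=0$, where the formula of Theorem \ref{3} does not directly apply. In each such case, however, one side of the relevant inequality is $0$ (since $\hat{\varepsilon}(0)=0$ by the vacuous satisfaction of $\varepsilon$-orthogonality) or equals $1$, so the inequality collapses trivially. The rest of the proof is essentially an exercise in exploiting monotonicity of $\varphi$ together with the sub/supermultiplicativity of $\Vert\cdot\Vert$ and $m(\cdot)$.
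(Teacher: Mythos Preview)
Your proposal is correct and follows essentially the same route as the paper: both arguments reduce everything to Theorem~\ref{3} via the submultiplicativity of $\Vert\cdot\Vert$, the supermultiplicativity of $m(\cdot)$, and the monotonicity of $\varphi(a,b)=\frac{a^{2}-b^{2}}{a^{2}+b^{2}}$. The only cosmetic difference is that the paper derives Part~(3) by specializing Part~(1) with $t_{2}=m_{2}$, whereas you argue directly for $SV$; your explicit bookkeeping of the degenerate cases ($ST=0$, $m_{1}m_{2}=0$) is a mild improvement in rigor over the paper's terse treatment.
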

\begin{proof}
(1) Since $m(S)m(T)$ $\leq m(ST)\leq \Vert ST\Vert \leq \Vert S\Vert \cdot \Vert
T\Vert$, it is easy to see that
\[
\hat{\varepsilon}(ST)=\frac{\Vert ST\Vert^{2}-m(ST)^{2}}{\Vert ST\Vert
^{2}+m(ST)^{2}}\leq \frac{t_{1}^{2}t_{2}^{2}-m_{1}^{2}m_{2}^{2}}{t_{1}^{2}%
t_{2}^{2}+m_{1}^{2}m_{2}^{2}}.
\]

(2) This is because $\Vert ST\Vert=\Vert S\Vert \cdot \Vert T\Vert \ $and
$m(S)m(T)$ $=m(ST)$ if $S$ $\in$ $%
\mathbb{C}
\mathcal{V}$.

(3) Let $T\in$ $%
\mathbb{C}
\mathcal{V}.$ If $T=0,$ then $\hat{\varepsilon}(ST)=0\leq$ $\hat
{\varepsilon}(S).$ If $T\neq0,$ then $$t_{2}=\Vert T\Vert=m(T)=m_{2},$$ and
hence
\[
\hat{\varepsilon}(ST)\leq \frac{t_{1}^{2}t_{2}^{2}-m_{1}^{2}m_{2}^{2}}%
{t_{1}^{2}t_{2}^{2}+m_{1}^{2}m_{2}^{2}}=\frac{t_{1}^{2}-m_{1}^{2}}{t_{1}%
^{2}+m_{1}^{2}}=\hat{\varepsilon}(S).
\]

\end{proof}

We know from the above proposition that $\hat{\varepsilon}(ST)\leq
\hat{\varepsilon}(S)$ if $T\in$ $%
\mathbb{C}
\mathcal{V}$. Moreover, in this case, $\hat{\varepsilon}(ST)$ might take any
value smaller than $\hat{\varepsilon}(S),$ see the following example.

\begin{example}
Suppose $H=\ell^{2}.$ Given $\lambda \in%
\mathbb{C}
,\  \delta \in \lbrack0,1]$ define two linear operators $T$ by
\[
Tx=(0,0,\lambda x_{1},\lambda x_{2},...),
\]
$S$ by
\[
Sx=(x_{1},2x_{2},(1+\delta)x_{3},(2-\delta)x_{4},(1+\delta)x_{5}%
,(2-\delta)x_{6},...),
\] where $x=(x_{n})\in H.$
Then $T\in$ $%
\mathbb{C}
\mathcal{V}$ and $\Vert T\Vert=|\lambda|$,
\[
STx=(0,0,(1+\delta)\lambda x_{1},(2-\delta)\lambda x_{2},(1+\delta)\lambda
x_{3},(2-\delta)\lambda x_{4},...).
\]
Clearly, $\lambda=0\ $implies $\hat{\varepsilon}(ST)=0.$ If $\lambda \neq0,$ by
letting $\delta \rightarrow0,$ we see $\hat{\varepsilon}(ST)$ is converging to
$\frac{3}{5}=\hat{\varepsilon}(S),$ and letting $\delta \rightarrow \frac{1}%
{2},$ we see $\hat{\varepsilon}(ST)\rightarrow0.$
\end{example}

It is disappointing that we couldn't find a formula for $\mathrm{dist}(T,%
\mathbb{C}
\mathcal{V})$ for arbitrary operator $T$, but note that if $H$ is finite
dimensional, isometries are unitaries, then $\mathrm{dist}(T,%
\mathbb{C}
\mathcal{V})=\mathrm{dist}(T,%
\mathbb{C}
\mathcal{U}).$ In \cite[Theorem 1.3]{Rog}, D. Rogers gave a formula for
distance to the unitaries when $H$ is separable. By using his result, we get a
formula for $\mathrm{dist}(T,%
\mathbb{C}
\mathcal{U}),$ we end this paper by listing this formula. The proof is not
hard and left to reader.

\begin{theorem}
Suppose $H\ $is separable and $T\in B(H).$ Then
\end{theorem}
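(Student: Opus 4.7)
The plan is to reduce the problem to D. Rogers' distance formula \cite[Theorem 1.3]{Rog}. Writing
\[
\mathrm{dist}(T, \mathbb{C}\mathcal{U}) = \inf_{\lambda \geq 0} \inf_{U \in \mathcal{U}} \|T - \lambda U\|,
\]
the inner infimum for $\lambda > 0$ equals $\lambda \cdot \mathrm{dist}(T/\lambda,\mathcal{U})$, which Rogers expresses as a maximum of two affine pieces in $\|T\|/\lambda$ and an appropriate minimum modulus. After scaling back, one gets a piecewise linear function of $\lambda$, and the outer infimum over $\lambda$ is an elementary one-dimensional optimization whose critical point sits at the crossover of the two pieces.

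First I would treat the balanced case $\dim\ker T = \dim\ker T^{\ast}$. Here the partial isometry in the polar decomposition $T = V|T|$ extends to a unitary $W$, and the substitution $U \mapsto W^{\ast}U$ gives $\inf_{U\in\mathcal{U}} \|T - \lambda U\| = \inf_{U\in\mathcal{U}} \||T| - \lambda U\|$. Testing against $U = I$ and lower-bounding along spectral vectors of $|T|$ approaching $\|T\|$ and $m(T)$ shows this infimum equals $\max(\|T\|-\lambda,\ \lambda-m(T))$ whenever $\lambda \in [m(T),\|T\|]$. Minimizing gives $\lambda^{\ast} = (\|T\|+m(T))/2$ and distance $(\|T\|-m(T))/2$, which by Theorem \ref{3} is exactly $\frac{1-\sqrt{(1-\hat\varepsilon(T))/(1+\hat\varepsilon(T))}}{2}\|T\|$.

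For the unbalanced cases I would invoke Rogers directly. In Case 1 ($\dim\ker T > \dim\ker T^{\ast}$) his formula replaces $m(T)$ by $-m_{e}(T^{\ast})$, yielding after rescaling
\[
\inf_{U\in\mathcal{U}}\|T-\lambda U\| = \max\bigl(\|T\|-\lambda,\ \lambda + m_{e}(T^{\ast})\bigr),
\]
whose minimum is attained at $\lambda^{\ast} = (\|T\|-m_{e}(T^{\ast}))/2 \geq 0$ (nonnegativity is automatic because $m_{e}(T^{\ast}) \leq \|T^{\ast}\| = \|T\|$) with value $(\|T\|+m_{e}(T^{\ast}))/2$. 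Case 2 is strictly symmetric, with $T$ and $T^{\ast}$ swapped. Finally one checks that the $\lambda = 0$ value $\|T\|$ never beats these optima and that the minimizing $\lambda$ remains in the regime where Rogers' piecewise formula holds.

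The main obstacle is the careful transcription of Rogers' formula in the non-balanced cases: identifying which variant of the essential minimum modulus appears, on which side, and verifying that the scaling converts it into the clean piecewise linear expression above. Once that bookkeeping is in place the optimization is routine, which is presumably why the authors write that the proof is not hard and leave it to the reader.
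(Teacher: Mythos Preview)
The paper does not actually prove this theorem: it says explicitly that ``the proof is not hard and left to reader,'' with the hint that it follows from Rogers' distance-to-unitaries formula. Your plan---write $\mathrm{dist}(T,\mathbb{C}\mathcal{U})=\inf_{\lambda\ge 0}\lambda\,\mathrm{dist}(T/\lambda,\mathcal{U})$, feed Rogers' formula into the inner infimum, and then do the one-variable minimization---is precisely the intended route, and the computations you sketch (crossover at $\lambda^\ast=(\|T\|\pm m_{\bullet})/2$ in each case) are correct.

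One small remark on the balanced case: your direct argument (test $U=I$, lower-bound via spectral vectors of $|T|$) cleanly gives $\inf_{U}\||T|-\lambda U\|=\max(\|T\|-\lambda,\lambda-m(T))$ for $\lambda\in[m(T),\|T\|]$, and the same spectral-vector lower bound also shows the value exceeds $\|T\|-m(T)$ for $\lambda$ outside that interval, so the global minimum really is $(\|T\|-m(T))/2$. Alternatively you could simply cite Rogers in this case as well, since $|T|$ is self-adjoint and hence balanced; either way the argument closes.
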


\[
\mbox{dist}\left(  T,\mathbb{C}\mathcal{U}\right)  =\left \{
\begin{array}
[c]{cc}%
\frac{\left \Vert T\right \Vert +m_{e}\left(  T^{\ast}\right)  }{2} & \text{if
}\dim \ker T>\dim \ker T^{\ast}\text{ }\\
\frac{\left \Vert T\right \Vert +m_{e}\left(  T\right)  }{2} & \text{if }%
\dim \ker T<\dim \ker T^{\ast}\\
\frac{\left \Vert T\right \Vert -m\left(  T\right)  }{2} & \text{if }\dim \ker
T=\dim \ker T^{\ast},
\end{array}
\right.
\]
$where$ $m_{e}\left(  T\right)  =\inf \{ \lambda:\lambda \in \sigma_{e}(|T|)\}.$
\newline \textbf{{Acknowledgement}} The authors would like to thank Professor
Junhao Shen for some valuable suggestions.

\bigskip

\end{document}